\renewcommand{\epsilon}{\varepsilon}
\newcommand{\Ex}{\mathbb{E}}
\newcommand{\Prob}{\mathbb{P}}
\newtheorem{theorem}{Theorem}
\newtheorem{lemma}{Lemma}
\begin{document}

\title{Reduced branching processes with very heavy tails}
\author{Andreas N. Lagerås\thanks{Centre for Theoretical Biology, Göteborg University.} \thanks{Address: Mathematical Sciences, Chalmers University of Technology, 412 96 Göteborg, Sweden.}
\and
Serik Sagitov\footnotemark[\value{footnote}]}
\maketitle
\begin{abstract}
The reduced Markov branching process is a stochastic model for the genealogy of an unstructured biological population. Its limit behavior in the critical case is well studied for the Zolotarev-Slack regularity parameter $\alpha\in(0,1]$. We turn to the case of very heavy tailed reproduction distribution $\alpha=0$ assuming Zubkov's regularity condition with parameter $\beta\in(0,\infty)$. Our main result gives a new asymptotic pattern for the reduced branching process conditioned on non-extinction during a long time interval. \\

\noindent \textbf{Keywords:} Reduced branching process, critical branching process, heavy tail, regular variation.\\
\noindent \textbf{MSC:} Primary 60J80, secondary 60F05.
\end{abstract}

\section{Introduction}

A single type branching process describes a population of particles with independent and identical reproduction laws. In the Markov branching process with continuous time each particle lives an exponential time with mean one and at death splits into a random number of daughter particles. If we assume that the branching system starts at time zero from a single particle with $\nu$ daughters, then the whole process is defined by the distribution of the random variable $\nu$. In the critical case when the average number of daughters is exactly one $\Ex[\nu]=1$, the limit behavior of the branching process is studied under the following Zolotarev-Slack regularity condition (cf.\  \cite{Zo} and  \cite{Sl}). The generating function $f(s)=\Ex[s^\nu]$ is assumed to satisfy
\begin{equation}\label{alpha}
    f(s)=s+(1-s)^{1+\alpha}L \left({1\over1-s}\right),\ 0\le\alpha\le1,
\end{equation}  
where $L$ is slowly varying at infinity. This condition is valid with $\alpha=1$ if the variance of $\nu$ is finite, while the case $0<\alpha<1$ is usually referred to as the infinite variance case and is well studied in the literature. 

We turn to the less studied case $\alpha=0$, focussing on a special class of slowly varying functions (which was initially introduced by Zubkov \cite{Zu})
\begin{equation}\label{beta}
L(x)\sim(\ln x)^{-\beta}L_1(\ln x),\ \beta>0,\ x\to\infty,
\end{equation}  
where $L_1$ is another slowly varying function. Given \eqref{alpha} and $\alpha=0$, Zubkov's regularity condition \eqref{beta} is equivalent to the next requirement on the tail distribution function (see Lemma \ref{l1})
\begin{equation}\label{beta1}
    \Prob(\nu>k)\sim\beta k^{-1}(\ln k)^{-1-\beta}L_1(\ln k),\ k\to\infty.
\end{equation}
Notice that in this case $\Ex[\nu(\ln_+\nu)^{\beta-\epsilon}]<\infty$ and $\Ex[\nu(\ln_+\nu)^{\beta+\epsilon}]=\infty$ for all $\epsilon>0$. This is a consequence of \cite[Thm 8.1.8]{BGT}.

The main characteristic of the branching process is the number of particles $Z(t)$ alive at time $t$. The key issues of the asymptotics of the non-extinction probability $Q(t)=\Prob(Z(t)>0)$  as $t\to\infty$ and the limit behavior of $Z(t)$ conditioned on non-extinction in the case $\alpha=0$ were recently addressed by Nagaev and Wachtel \cite{NW}. They consider the discrete time version of the Markov branching process and obtain general limit results without the extra assumption \eqref{beta}. By repeating Zubkov's arguments \cite[p.\ 607]{Zu}, our results could be carried over to discrete time with no change. In Section \ref{s4} we give a direct proof (which is more straightforward than the counterpart of the Nagaev-Wachtel proof) of the following result with the extra condition. 

\begin{theorem}\label{th}
If \eqref{alpha} holds with $\alpha=0$ and $L$ satisfies \eqref{beta}, then

\begin{equation}\label{Q}
    Q(t)=\exp\left(-t^{1/(1+\beta)} L_q(t)\right),
\end{equation} 
where $L_q(t)$ is such a slowly varying function as $t\to\infty$ that
\[L_q^{1+\beta}(t)\sim(1+\beta)L_1(t^{1/(1+\beta)} L_q(t)).\]
Furthermore, there exists a regularly varying function (see \eqref{cc})
\begin{equation}\label{c}
    c(t)=t^{\beta(1+\beta)^{-2}}L_c(t)
\end{equation}
such that for all $x\ge0$
\begin{equation}\label{lt}
P(Z(t)\le e^{xc(t)}|Z(t)>0)\to1-e^{-x^{\beta+1}},\ t\to\infty.
\end{equation}
\end{theorem}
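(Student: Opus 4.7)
The plan is to base everything on the Kolmogorov backward equation for the probability generating function $F(t,s) = \Ex[s^{Z(t)}]$, namely $\partial_t F = f(F) - F$ with $F(0,s) = s$. Writing $g(t,s) := 1 - F(t,s)$ and inserting \eqref{alpha} with $\alpha = 0$ converts this into the autonomous separable ODE
\begin{equation*}
-\partial_t g(t,s) = g(t,s)\, L\bigl(1/g(t,s)\bigr), \qquad g(0,s) = 1-s,
\end{equation*}
so that $Q(t) = g(t,0)$ and, after the substitution $y = 1/q$, the equation integrates to
\begin{equation*}
G\bigl(1/g(t,s)\bigr) - G\bigl(1/(1-s)\bigr) = t, \qquad G(x) := \int_{x_0}^{x} \frac{dy}{y\, L(y)}.
\end{equation*}

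I would then combine \eqref{beta} with Karamata's theorem to obtain $G(x) \sim (\ln x)^{\beta+1}/\bigl((\beta+1)\, L_1(\ln x)\bigr)$ as $x \to \infty$, and invert to get \eqref{Q}: setting $w(t) := -\ln Q(t)$, the relation $G(1/Q(t)) = t$ becomes $w(t)^{\beta+1}/L_1(w(t)) \sim (\beta+1)\, t$, whose solution is $w(t) \sim t^{1/(1+\beta)} L_q(t)$ with $L_q$ characterised by the fixed-point relation stated in the theorem. Existence and slow variation of $L_q$ follow from a standard regular-variation inversion (cf.\ \cite[Section~1.5]{BGT}).

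For the conditional limit I would exploit the identity
\begin{equation*}
\Ex\bigl[s^{Z(t)} \bigm| Z(t)>0\bigr] = 1 - \frac{g(t,s)}{Q(t)}
\end{equation*}
with $s_t = \exp\bigl(-e^{-xc(t)}\bigr)$, so that $1 - s_t \sim e^{-xc(t)}$. Since then $s_t^{Z(t)} \to \mathbf{1}\{\ln Z(t) < xc(t)\}$ pointwise off the null event $\ln Z(t) = xc(t)$, proving \eqref{lt} reduces to showing $g(t,s_t)/Q(t) \to e^{-x^{\beta+1}}$, i.e., $-\ln g(t,s_t) = w(t) + x^{\beta+1} + o(1)$. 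Feeding $G(1/Q(t)) = t$ and $G(1/(1-s_t)) \sim (xc(t))^{\beta+1}/\bigl((\beta+1) L_1(xc(t))\bigr)$ into the integrated identity and using the linearisation $(w+r)^{\beta+1} - w^{\beta+1} \sim (\beta+1)\, w^\beta r$ for bounded $r$, one is forced to take
\begin{equation*}
c(t)^{\beta+1} \sim \frac{w(t)^{\beta}\, L_1(c(t))}{(\beta+1)\, L_1(w(t))},
\end{equation*}
which yields exactly the regular variation \eqref{c} of index $\beta/(1+\beta)^2$ and identifies $L_c$.

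The main obstacle is not conceptual but technical: the entire argument chains together slow-variation asymptotics of $L$, $L_1$, $L_q$, and $L_c$ evaluated at the four different regularly varying arguments $1/Q(t)$, $1/(1-s_t)$, $w(t)$, and $c(t)$. Justifying these approximations uniformly in $x$ on compact subsets of $[0,\infty)$, and in particular controlling the linearisation remainder $(w(t)+r_t)^{\beta+1} - w(t)^{\beta+1} - (\beta+1)\, w(t)^{\beta}\, r_t$ when $r_t$ stays bounded while $w(t) \to \infty$, requires the uniform slow-variation estimates from \cite{BGT}. Once these are in place, convergence holds at every continuity point $x \ge 0$ of the Weibull limit and \eqref{lt} follows.
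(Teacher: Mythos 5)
Your route is essentially the paper's: your $G$ is the paper's $\pi$ (equivalently $\rho$ after the logarithmic substitution), $G(1/Q(t))=t$ is \eqref{rq}, the Karamata asymptotics plus regular-variation inversion give \eqref{Q} exactly as in the paper, and \eqref{lt} is extracted from the conditional generating function at a geometrically scaled argument by linearising the integrated identity, which is what the paper does via Taylor expansions of $q$ and $\rho$ and the definition \eqref{cc}.

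Two points need repair, though neither changes the architecture. First, the constant in your fixed-point relation for $c(t)$ sits on the wrong side: matching $\int_{w(t)}^{w(t)+\delta}du/g(u)\approx\delta\,w(t)^{\beta}/L_1(w(t))$ against $G(e^{xc(t)})\sim x^{\beta+1}c(t)^{\beta+1}/\bigl((\beta+1)L_1(c(t))\bigr)$ forces $c(t)^{\beta+1}\sim(1+\beta)\,w(t)^{\beta}L_1(c(t))/L_1(w(t))$, i.e.\ multiplication by $(1+\beta)$, not division; with the relation as written the limit in \eqref{lt} would come out as $1-e^{-x^{\beta+1}/(1+\beta)^{2}}$. (Sanity check with $L_1\equiv1$: there $\rho(x)=x^{\beta+1}/(\beta+1)$ and the correct scaling satisfies $c(t)^{\beta+1}=(1+\beta)\,q(t)^{\beta}$ exactly.) The index $\beta(1+\beta)^{-2}$ in \eqref{c} is unaffected, so this is a calibration slip that your own derivation corrects once the algebra is done carefully, but the normalisation $1-e^{-x^{\beta+1}}$ in the theorem requires it. Second, the reduction of \eqref{lt} to $g(t,s_t)/Q(t)\to e^{-x^{\beta+1}}$ cannot be justified by saying $s_t^{Z(t)}\to\mathbf{1}\{\ln Z(t)<xc(t)\}$ pointwise: $Z(t)$ is not a fixed random variable, so there is no pointwise limit to invoke. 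The transfer from $\Ex[\exp(-Z(t)e^{-xc(t)})\,|\,Z(t)>0]\to1-e^{-x^{\beta+1}}$ to the distributional statement needs the standard infinite-mean argument, e.g.\ the sandwich $e^{-e^{-\epsilon c(t)}}\,\Prob(\ln Z(t)\le(x-\epsilon)c(t)\,|\,Z(t)>0)\le\Ex[e^{-Z(t)e^{-xc(t)}}\,|\,Z(t)>0]\le\Prob(\ln Z(t)\le(x+\epsilon)c(t)\,|\,Z(t)>0)+e^{-e^{\epsilon c(t)}}$ combined with continuity of the Weibull limit; this is exactly the step the paper delegates to \cite{Darling:1970}, \cite{Seneta:1973} and \cite{NW}. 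With these two fixes your proposal coincides with the paper's proof.
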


There is a striking feature in the asymptotics of $Z(t)$ which was pointed out to us by V.\ Wachtel. Notice that the regular variation index $\beta(1+\beta)^{-2}$ of the scaling function $c(t)$ increases as $\beta$ goes from infinity down to $\beta=1$. As the reproduction tail becomes heavier this is what we expect to happen, namely to have larger  asymptotic value for the population size $Z(t)$ at survival. What is puzzling about \eqref{lt} however, is that as $\beta$ falls below the threshold value 1, the corresponding scaling function $c(t)$ attributes smaller size for the surviving population despite the fact that the reproduction tail becomes even heavier. 

Some light on this phenomenon is shed by the following seminal results by Zubkov \cite{Zu}. If $\tau(t)$ is the time to the most common ancestor for all particles alive at time $t$, then under condition \eqref{alpha} with $0<\alpha\le1$ 
\begin{equation}\label{zu0}
    \Prob(\tau(t)\le tx|Z(t)>0)\to x,\ t\to\infty,
\end{equation}
while under the conditions of Theorem \ref{th} 
\begin{equation}\label{zu}
    \Prob(\tau(t)\le tx|Z(t)>0)\to x^{\beta/(1+\beta)},\ t\to\infty.
\end{equation}
The latter means that the ratio $\tau(t)/t$ is asymptotically distributed over $[0,1]$ with the density function
\begin{equation}\label{fi}
  \phi_\beta(x)=\beta(1+\beta)^{-1}x^{-1/(1+\beta)}.
\end{equation} 
That is, if $\beta$ is changed towards smaller values, then the time to the most recent common ancestor will become shorter and one can expect an eventual drop in the size of the surviving population as $\beta$ goes below a certain threshold value. Why the threshold value should be $\beta=1$ is an interesting \emph{open problem}. 

Section \ref{s3} presents the so-called reduced branching process describing the genealogy of the particles alive at time $t$. In this section we recall the known limit processes for the reduced branching processes in the cases $0<\alpha\le1$ obtained in \cite{FSS} and \cite{Ya}. Then we state the main result of this paper, Theorem \ref{the}, giving a new limit structure as $t\to\infty$ for the reduced branching process in the case $\alpha=0$ and \eqref{beta}. Our Theorem \ref{the} is an extension of \eqref{zu} in the same manner as the results by \cite{FSS} and \cite{Ya} are extensions of  \eqref{zu0}. It is worth mentioning that in Theorem \ref{the} we do not loose much generality by assuming \eqref{beta} in the case $\alpha=0$ since by Zubkov's proof it actually follows that non-degenerate limit distributions of $\tau(t)/t$ must be either of the form \eqref{zu0} or \eqref{zu} for critical branching processes. In Section \ref{s2} we establish some preliminary results, and in Section \ref{s4} we prove Theorems \ref{th} and \ref{the}.

\section{Limit theorem for the reduced branching process}\label{s3}
Let $Z(u,t)$ stand for the number of particles at time $u$ which stay alive or have descendants at a later time $t\ge u$. For a given time horizon $t$ the process $\{Z(u,t), 0\le u\le t\}$ is called the reduced branching process. The term \emph{reduced} reflects the fact that we count only those branches in the full genealogical tree that reach to the time of observation $t$. Clearly,
\begin{equation}\label{sam}
    \Prob(Z(u,t)=1)=\Prob(\tau(t)\le t-u),
\end{equation}
where $\tau(t)$ is the time to the most recent common ancestor. If \eqref{alpha} holds with $0<\alpha\le1$, then the limit theorem \eqref{zu0} for the time to the most recent ancestor is extended to the following limit theorem for the reduced branching process by \cite{FSS} and \cite{Ya}
 \begin{equation}\label{lpa}
    (Z(tx,t)|Z(t)>0)\overset{d}{\to} Z_{\alpha}(-\log(1-x)),
\end{equation}
where the convergence of the processes over the time interval $0\le x<1$ holds in the Skorohod sense.

Here the limit is a time transformed Markov branching process $Z_{\alpha}(t)$, whose generating functions can be written down explicitly. The particles of this process live exponential times with mean one and at the moment of death produce offspring with the generating function
$$
f_{\alpha}(s) = s + \frac{1}{\alpha}(1-s)((1-s)^{\alpha}-1).
$$
This process is supercritical with the offspring number $\nu_\alpha$ having mean $f'_{\alpha}(1)=1+1/\alpha$ and a distribution over $k=2,3,\dots$ with probabilities
\begin{equation}\label{na}
  \Prob(\nu_\alpha=k) = \left\{\begin{aligned}
&\mathbf{1}_{\{k=2\}}, &&\text{if }\alpha = 1, \\
&\frac{(1+\alpha)\Gamma(k-1-\alpha)}{\Gamma(1-\alpha)\Gamma(k+1)}, &&\text{if }0<\alpha <1,
\end{aligned}\right.
\end{equation}
where $\Gamma$ is the Gamma function. It is easily checked that the following generating function
\[F_{\alpha}(s,t) = 1-(1-e^{-t}+e^{-t}(1-s)^{-\alpha})^{-1/\alpha}\]
solves the forward Kolmogorov equation 
$${\partial F_{\alpha}(s,t)\over \partial t}=(f_{\alpha}(s)-s){\partial F_{\alpha}(s,t)\over\partial s}$$
thus providing the generating function of $Z_\alpha(t)$. Note that $Z_1(t)$ is the well-known Yule process of binary splitting with
$$
F_1(s,t) = \frac{s e^{-t}}{1-(1-e^{-t})s},
$$
implying that the distribution of $Z_1(t)$ is shifted geometric for all $t$.

To state our main result we introduce a Markov branching process $Z_0(t)$ with infinite mean for the offspring number. Here the reproduction law is given by 
\begin{equation}\label{n0}
    \Prob(\nu_0=k) =\frac{1}{k(k-1)},\ k=2,3,\ldots
\end{equation}
corresponding to the generating function
$$
f_0(s) = s + (1-s)\log(1-s).
$$
Thus the part of the formula \eqref{na} given for $0<\alpha<1$ is also valid for $\alpha=0$. The generating function of $Z_0(t)$ is
\begin{equation}\label{limit_dist}
F_{0}(s,t) = 1-(1-s)^{e^{-t}}
\end{equation}
which can be used to compute 
\begin{equation}\label{1}
    \Prob(Z_0(t)=1)=e^{-t}.
\end{equation}
\begin{theorem}\label{the}
Under the conditions of Theorem \ref{th} the weak convergence of the processes over the time interval $0\le x<1$ 
 $$(Z(tx,t)|Z(t)>0)\overset{d}{\to} Z_{0}\left(-{\beta\over1+\beta}\log(1-x)\right)$$
holds in the Skorohod sense as $t\to\infty$.
\end{theorem}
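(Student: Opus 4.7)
My approach proceeds in three standard stages: derive an explicit generating function for the reduced process via the branching property, extract its one-dimensional scaling limit using the forward Kolmogorov equation together with the asymptotics of $Q(t)$ already established in Theorem \ref{th}, then upgrade to Skorohod convergence of the trajectories using the Markov branching structure.

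\emph{Explicit generating function.} Conditioning on $Z(u)$ and noting that each particle at time $u$ contributes independently to $Z(u,t)$ with probability $Q(t-u)$, one has $\Ex[s^{Z(u,t)}] = F\bigl(1 - Q(t-u)(1-s),\, u\bigr)$, and the semigroup identity $F(1-Q(t-u),u) = F(F(0,t-u),u) = 1-Q(t)$ gives
\[
\Ex[s^{Z(u,t)} \mid Z(t)>0] = \frac{F\bigl(1 - Q(t-u)(1-s),\, u\bigr) - (1-Q(t))}{Q(t)}.
\]
This reduces the problem to an asymptotic analysis of $1-F$ with a moving argument.

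\emph{One-dimensional limit.} Integrating $\partial_t F = (1-F)\,L(1/(1-F))$ (the case $\alpha=0$ of \eqref{alpha}) yields the implicit relation
\[
K\!\left(\frac{1}{1-F(s,t)}\right) - K\!\left(\frac{1}{1-s}\right) = t, \qquad K(y) := \int_1^y \frac{dv}{v\,L(v)} \sim \frac{(\ln y)^{1+\beta}}{(1+\beta) L_1(\ln y)},
\]
by Karamata's theorem. Put $u = tx$ and $s' = 1 - Q(t(1-x))(1-s)$. Writing $\ell(t) := -\ln Q(t) \sim t^{1/(1+\beta)} L_q(t)$, the slow variation of $L_q$ yields $\ell(t(1-x)) \sim (1-x)^{1/(1+\beta)} \ell(t)$, and $-\ln(1-s') = \ell(t(1-x)) - \ln(1-s)$ is a small relative perturbation of $\ell(t(1-x))$. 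Taylor expanding $K$ at $1/(1-s')$ using $K(1/Q(t(1-x))) = t(1-x)$, and then inverting $K(\,\cdot\,) = t + o(t)$, with the identity $L_q^{1+\beta}(t)\sim (1+\beta)L_1(\ell(t))$ used to cancel the $L_1$ factor, produces
\[
-\ln\bigl(1-F(s',tx)\bigr) = \ell(t) - (1-x)^{\beta/(1+\beta)} \ln(1-s) + o(1).
\]
Exponentiating and substituting in the explicit formula above yields
\[
\Ex[s^{Z(tx,t)}\mid Z(t)>0] \longrightarrow 1 - (1-s)^{(1-x)^{\beta/(1+\beta)}} = F_0\!\left(s,\, -\tfrac{\beta}{1+\beta}\log(1-x)\right),
\]
which matches \eqref{limit_dist} at the required time.

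\emph{Finite-dimensional distributions and Skorohod tightness.} The process $u \mapsto Z(u,t)$ is a time-inhomogeneous Markov branching process: conditional on $\{Z(u,t)=k,\, Z(t)>0\}$, the future trajectory $(Z(v,t))_{u\le v\le t}$ is the sum of $k$ i.i.d.\ copies of $(Z(v-u, t-u) \mid Z(t-u)>0)$. The limit $Z_0(\cdot)$ has the analogous branching property. Combining this with the one-dimensional limit above and applying it to each ancestral subtree yields convergence of all finite-dimensional distributions by induction on the number of time points $0 \le x_1 < \cdots < x_n < 1$. Since $u \mapsto Z(u,t)$ is integer-valued, non-decreasing and pure jump, and the limit process has no fixed discontinuities, Skorohod tightness follows from the fdd convergence by the standard criterion for monotone jump processes.

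\emph{Main obstacle.} The delicate point is the asymptotic identity for $-\ln(1-F(s',tx))$: one must propagate the perturbation $-\ln(1-s)$ through two nested slowly varying functions $L$ and $L_1$ and verify that their contributions cancel to leave the clean exponent $(1-x)^{\beta/(1+\beta)}$. This is essentially the same type of computation that underlies \eqref{Q} for $Q(t)$ itself, but carried out with a moving argument; the principal additional care is to keep the error estimates uniform in $s$ on compact subsets of $[0,1)$, which is what converts pointwise convergence of generating functions into weak convergence of distributions.
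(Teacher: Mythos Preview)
Your approach is essentially the paper's: your implicit function $K$ is exactly the paper's $\pi$ (and $K(e^{\,\cdot\,})=\rho$), and the core computation---Taylor-expand $\rho$ around $q(t(1-x))$ to linearize the perturbation $-\ln(1-s)$, then Taylor-expand the inverse $q=\rho^{-1}$ around $t$, with the exponent $(1-x)^{\beta/(1+\beta)}$ emerging from the regular variation of $q'(t)=g(q(t))$---is identical in substance.

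Two small remarks. First, the paper makes explicit something your sketch glosses over with the phrase ``inverting $K(\cdot)=t+o(t)$'': to obtain the claimed $o(1)$ (not merely $o(\ell(t))$) error in $-\ln(1-F(s',tx))$, the second-order remainders in both Taylor expansions must be controlled, and this uses $g'(x)\sim -\beta g(x)/x$ (hence $q''\sim\beta(q')^2/q$ and $\rho''(q)\sim\beta/(q'q)$), which the paper isolates as Lemma~\ref{l3} and proves from the tail asymptotic \eqref{beta1}; the identity $L_q^{1+\beta}\sim(1+\beta)L_1(\ell)$ alone is not quite enough. Second, for Skorohod tightness the paper simply imports the argument of \cite{FSS}, whereas your observation that $u\mapsto Z(u,t)$ is non-decreasing integer-valued with a stochastically continuous limit gives a self-contained route to the same conclusion.
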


From this result it easy to recover \eqref{zu} using \eqref{sam} and \eqref{1}. The limit process $Z_{0}\left(-\beta(1+\beta)^{-1}\log(1-x)\right)$ gives the following algorithm defining the genealogical tree:
\begin{itemize}
    \item start with a single particle at time 0 which lives a random time $1-\tau_0$, where $\tau_0$ has density function  \eqref{fi},
    \item at the time $1-\tau_0$ split the initial particle into a random number of daughter particles according to the distribution \eqref{n0},
    \item given $\tau_0$, let each daughter particle, independently of other particles, mimic the life of its mother, namely let it live a time $(1-\tau_1)\tau_0$, where $\tau_1$ has density \eqref{fi} and then split it using \eqref{n0},
    \item given $\tau_0$ and  $\tau_1$, let a granddaughter particle live a time $(1-\tau_2)\tau_1\tau_0$, where $\tau_2$ has density  \eqref{fi} and then split it using \eqref{n0}, and so on.
\end{itemize}
This should be compared with a similar algorithm describing the limit process in \eqref{lpa} for $0<\alpha\le1$, where the density \eqref{fi} is replaced by the uniform density over $[0,1]$, and the offspring number distribution \eqref{n0} is replaced by \eqref{na}. 

Figure \ref{figuren} clearly indicates an interesting transformation of the limit law for the genealogical tree as the parameter $\alpha$ decreases from 1 to 0 and then at $\alpha=0$ the new parameter $\beta$ goes from $\infty$ down to 0. The $\alpha$-model has common branch length distribution and the value of parameter $\alpha$ determines the reproduction law, which smoothly changes from the deterministic splitting at $\alpha=1$ via \eqref{na} to the distribution \eqref{n0} with infinite mean at $\alpha=0$. It is easy to verify the stochastic domination property: if $0\leq \alpha_1 \leq \alpha_2 \leq 1$, then $\Prob(\nu_{\alpha_2}>k)\leq \Prob(\nu_{\alpha_1}>k)$ for all $k$. This property is nicely illustrated by simulations on the left part of Figure \ref{figuren}: the lower is the value of the parameter $\alpha$ the faster is the growth of the genealogical tree.

At $\alpha=0$ when the new parameter $\beta$ takes over the control, the dynamics of tree behavior drastically changes.  As $\beta$ goes from larger to smaller values it is the branch length (and not the reproduction) that undergoes transformation. Since the reproduction law \eqref{n0} is common for all $\beta\in(0,\infty)$, we observe an opposite development of the tree growth: the closer $\beta$ is to zero (and therefore the heavier is the tail of the  original reproduction law), the closer the splitting times are located  to the observation time. Here we have another example of the phenomenon mentioned earlier concerning the critical value $\beta=1$. We observe a growth pattern (this time in terms of genealogical trees) which reaches its top and then is followed by a monotone decline. 

\textbf{Remarks} This development in the tree growth depending on decreasing values of $\beta$ indicates that at the region $\beta=0$ the linear time scaling for the reduced process should be replaced by a non-linear one in agreement with \cite[Thm 4(b)]{Zu}. 

It was pointed out in \cite{Sa} that the limit reproduction law for the critical reduced branching process with $\alpha\in(0,1]$ is related to the merging law of the $\Lambda$-coalescent with $\Lambda(dx) = (1-\alpha)x^{-\alpha}dx$. Namely, if $Y_n$ is the size of the next merger given there currently are $n$ branches, then
$$\Prob(Y_n = k )  = \Prob(\nu_{\alpha} = k| \nu_{\alpha} \leq n).$$
Theorem \ref{the} shows that there is a similar link between the reduced processes with $\alpha = 0$ and the $\Lambda$-coalescent with uniform $\Lambda$, i.e.\ the Bolthausen-Sznitman coalescent (see \cite{Pi}).

\begin{figure}
\centering
\includegraphics[width=6.5cm]{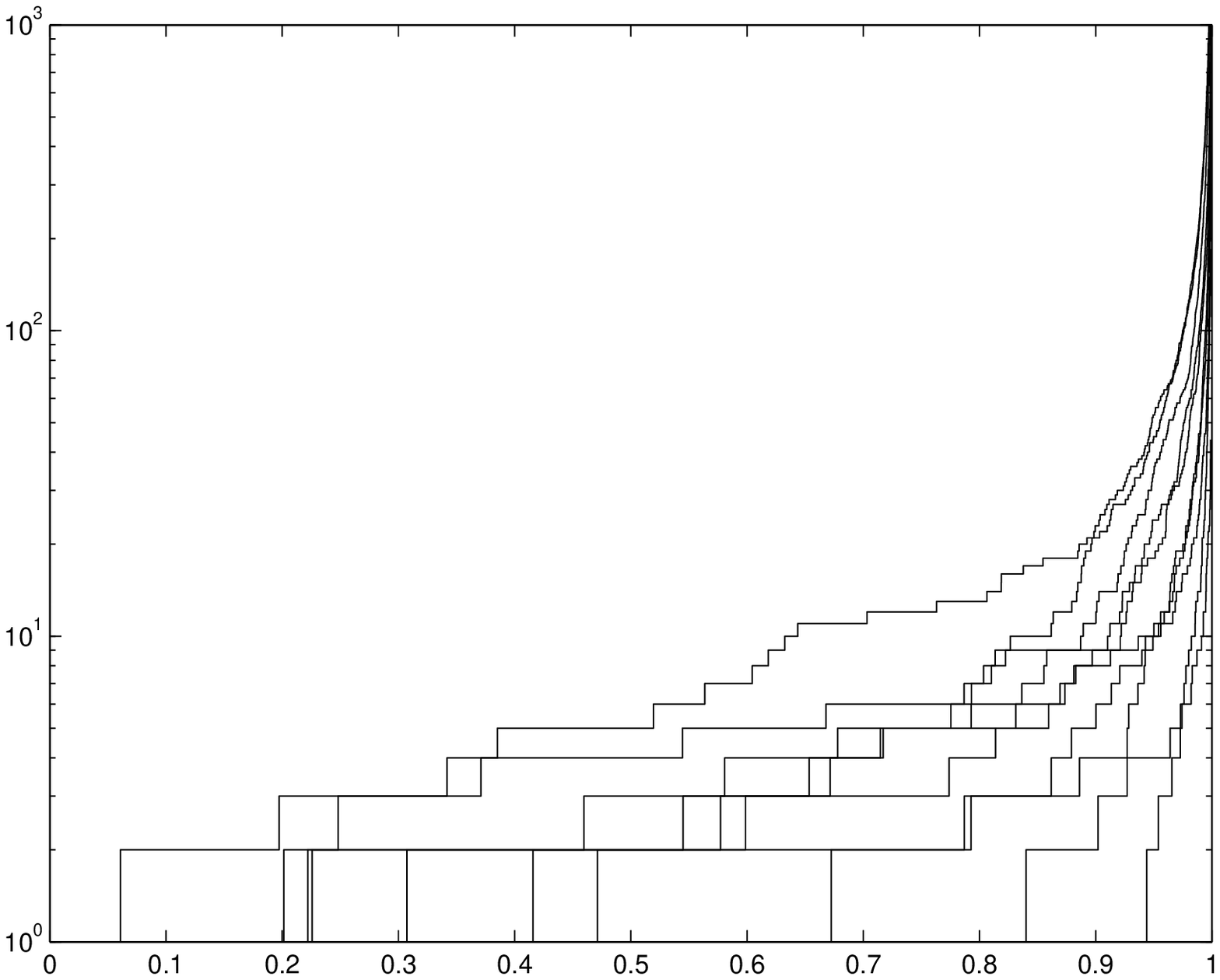}\includegraphics[width=6.5cm]{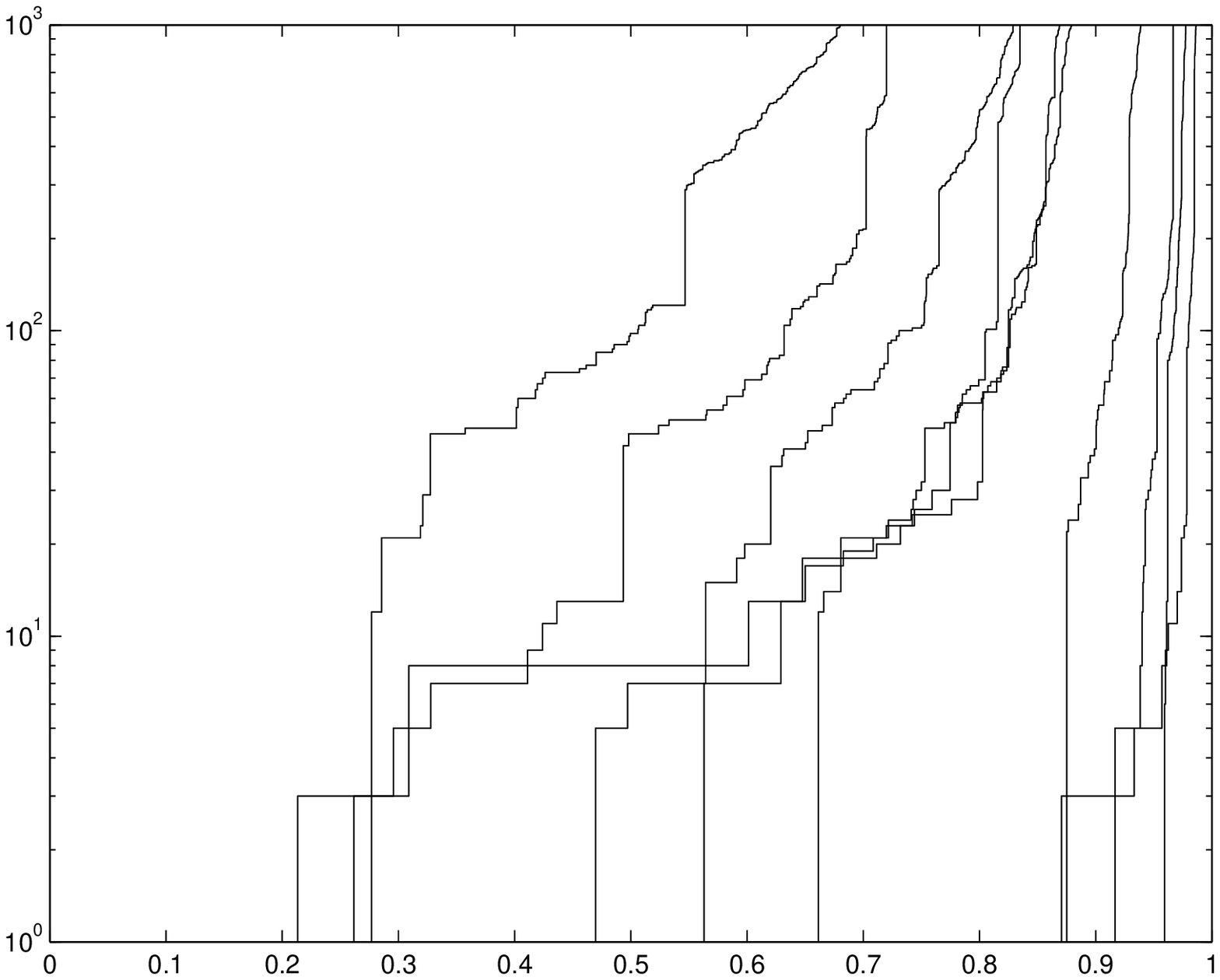}
\includegraphics[width=6.5cm]{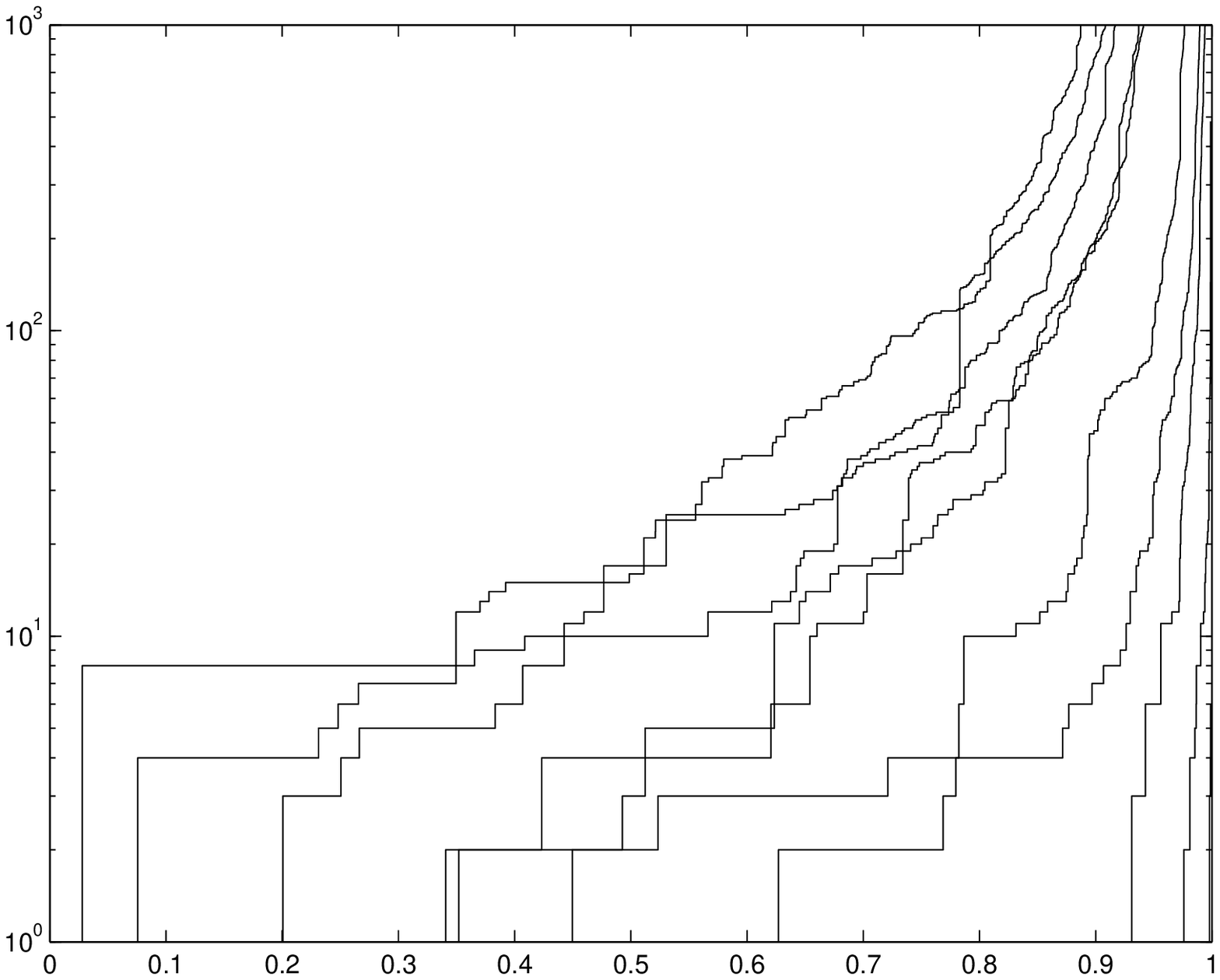}\includegraphics[width=6.5cm]{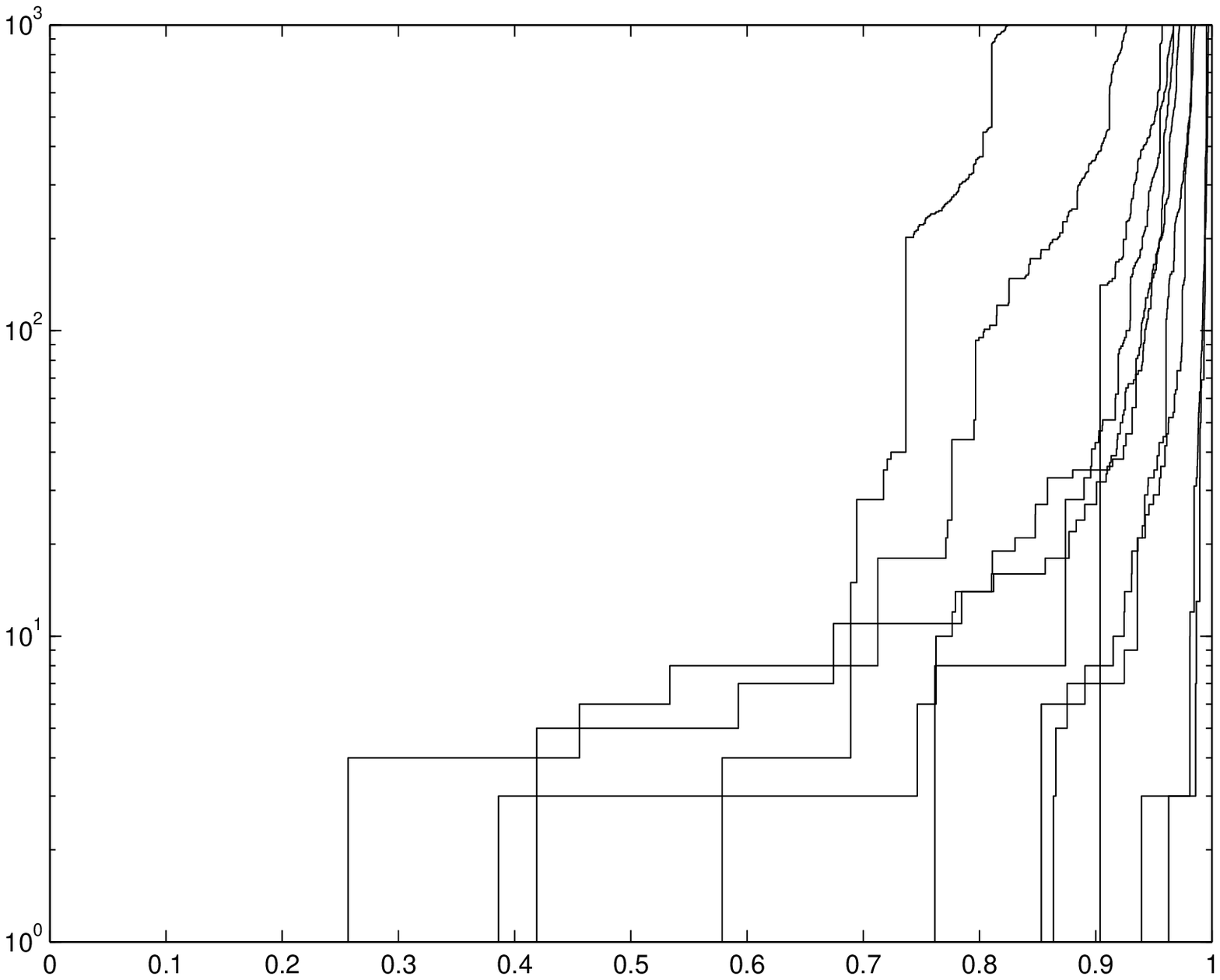}
\includegraphics[width=6.5cm]{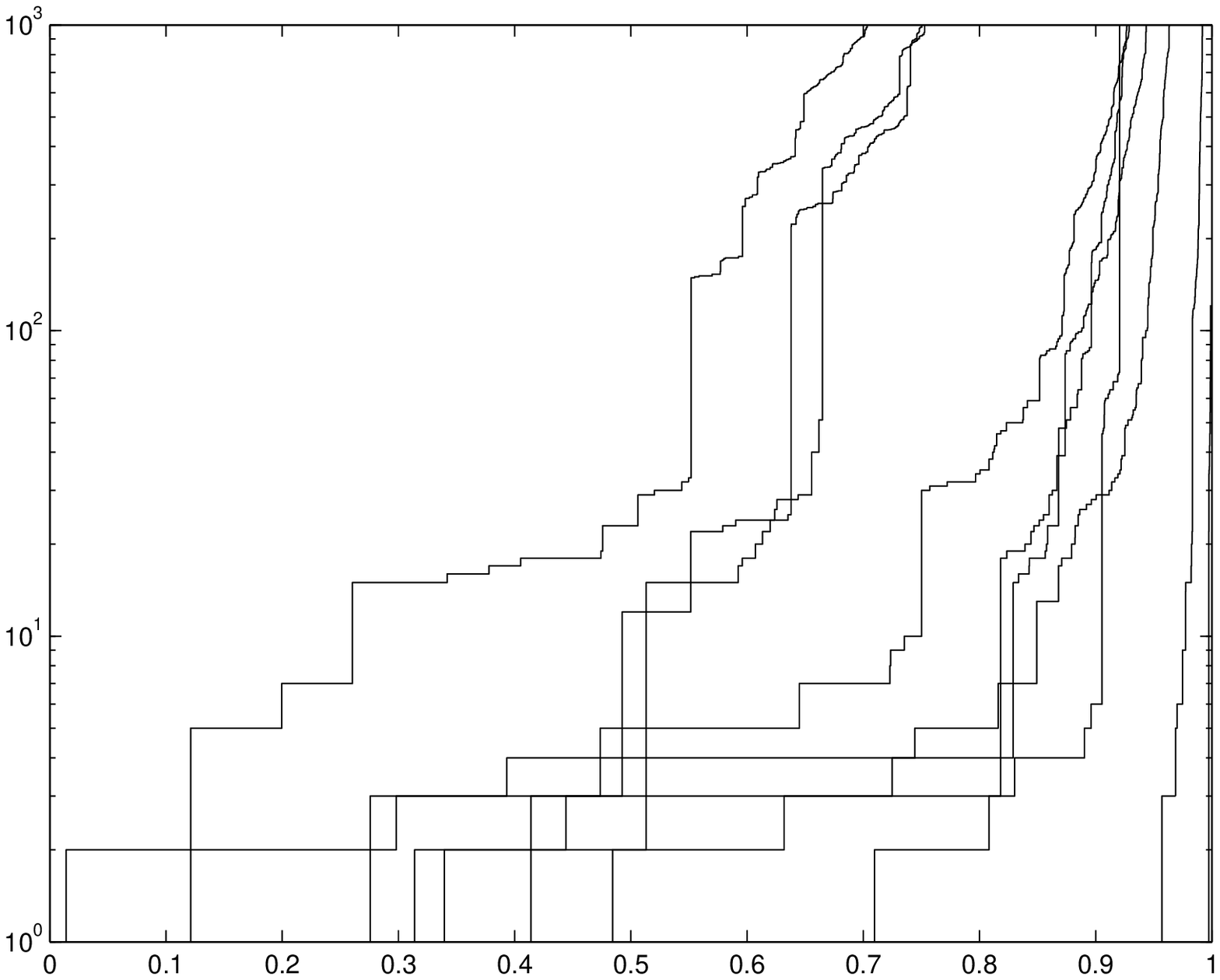}\includegraphics[width=6.5cm]{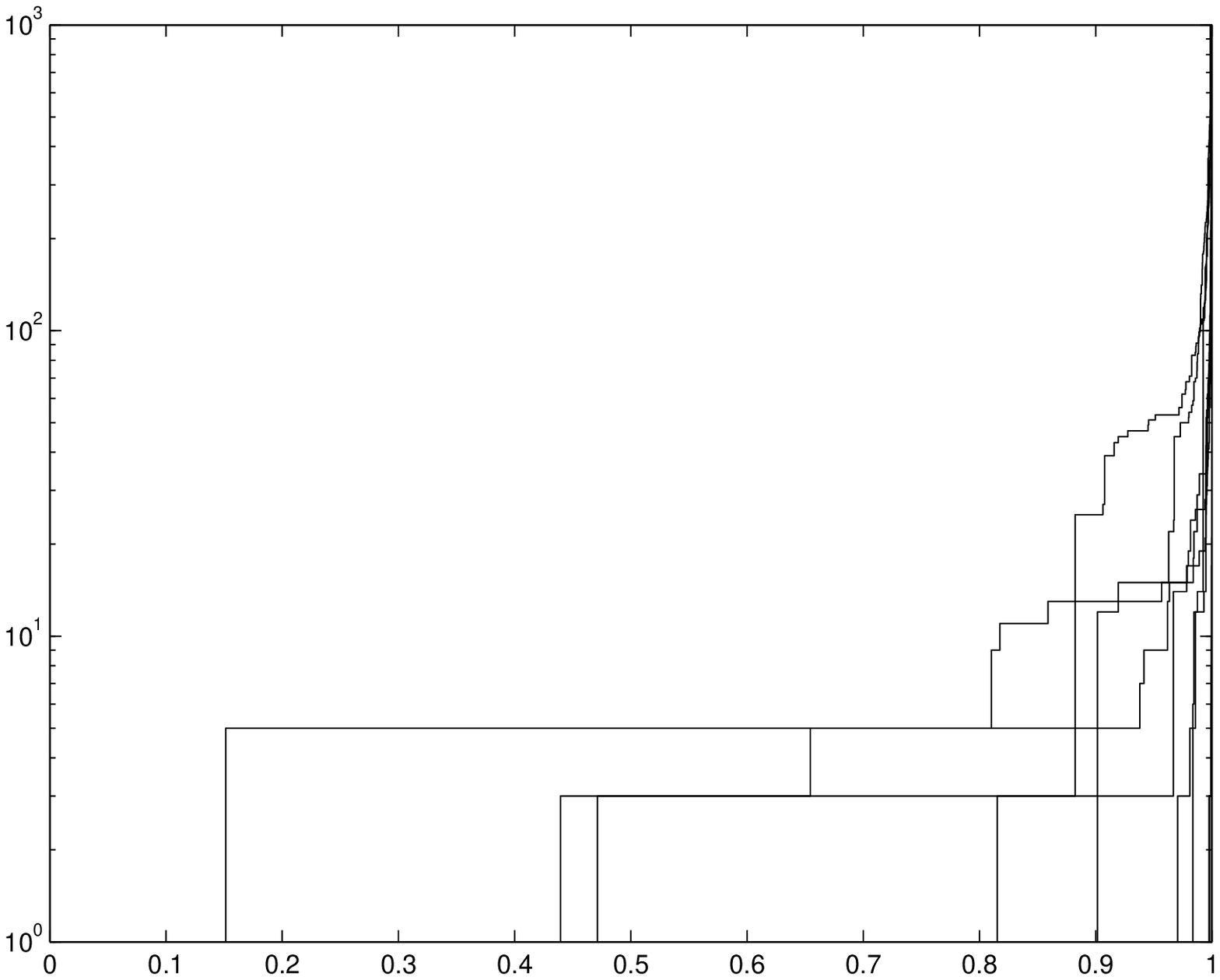}
\caption{Six blocks with ten realizations each of the limit process for different parameter values. Three blocks on the left correspond to the values: top $\alpha = 1$, middle $\alpha=0.3$, bottom $\alpha=0.1$.  Three blocks on the right correspond to the values: top $\beta = 5$, middle $\beta=1$, bottom $\beta=0.2$. For $\beta=0.2$, some trajectories are too close to 1 to be visible. 
Time to the most recent common ancestor is the horizontal distance from the right end of the time interval $[0,1)$ to the point where the trajectory leaves the state $1 = 10^0$.}\label{figuren}
\end{figure}

\section{Preliminary results}\label{s2}
If $f(s)>s$ for $0\le s<1$, the function 
\[\pi(s)=\int_0^s\frac{dv}{f(v)-v}\]
is obviously monotone and consequently $\rho(x) = \pi(1-e^{-x})$ as well. Let \eqref{alpha} hold with $\alpha=0$ and put $g(x)=L(e^x)$, then condition \eqref{beta} is equivalent to 
\begin{equation}\label{g}
    g(x)\sim x^{-\beta}L_1(x),\ x\to\infty.
\end{equation}
Note that
\begin{align*}
\pi(s)&=\int_{1-s}^1\frac{dt}{f(1-t)-1+t}\\
&=\int_{1-s}^1\frac{1}{L(1/t)}\frac{dt}{t}=\int_0^{-\ln(1-s)}\frac{dw}{L(e^{w})}=\int_0^{-\ln(1-s)}\frac{dw}{g(w)},
\end{align*}
therefore
\begin{equation}\label{rg}
   \rho(x)=\int_0^xdw/g(w),
\end{equation}
and it follows from \eqref{g} and \cite[Thm 1.5.8]{BGT} that
\begin{equation}\label{r}
    \rho(x)\sim {x^{\beta+1}\over(\beta+1)L_1(x)},\ x\to\infty.
\end{equation}

\begin{lemma} \label{l1}  Put $q(t) = -\ln Q(t)$, then 
\begin{eqnarray}
  \rho(q(t)) &=&t, \label{rq}\\
  q'(t) &=& g(q(t)).\label{q'} 
\end{eqnarray}  
\end{lemma}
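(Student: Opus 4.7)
The plan is to start from the standard ODE governing the survival probability and separate variables. Specifically, for a continuous-time Markov branching process with particle lifetime parameter one and offspring generating function $f$, the generating function $F(s,t)=\Ex[s^{Z(t)}]$ satisfies the backward Kolmogorov equation
\[
\frac{\partial F(s,t)}{\partial t}=f(F(s,t))-F(s,t),
\]
with $F(s,0)=s$. Setting $s=0$ and writing $F(0,t)=1-Q(t)$ gives an autonomous ODE for $1-Q(t)$ with initial value $0$.

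Since the process is critical, $f(u)-u>0$ for $u\in[0,1)$, so I can separate variables and integrate from $0$ to $1-Q(t)$ to obtain
\[
t=\int_0^{1-Q(t)}\frac{dv}{f(v)-v}=\pi(1-Q(t)).
\]
Now I just substitute $Q(t)=e^{-q(t)}$, so that $1-Q(t)=1-e^{-q(t)}$, and use the definition $\rho(x)=\pi(1-e^{-x})$ to conclude $\rho(q(t))=t$, which is \eqref{rq}.

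For \eqref{q'}, I differentiate the identity $\rho(q(t))=t$ with respect to $t$, giving $\rho'(q(t))\,q'(t)=1$. From the representation \eqref{rg}, $\rho(x)=\int_0^x dw/g(w)$, so $\rho'(x)=1/g(x)$, and the formula $q'(t)=g(q(t))$ follows immediately.

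There is essentially no obstacle here; the lemma is a direct consequence of the Kolmogorov equation combined with the change of variable $v=1-e^{-w}$ that was already carried out in the computation of $\pi$ just before the statement. The only thing worth flagging is that $f(v)-v>0$ on $[0,1)$ is needed both to justify separation of variables and to ensure $\pi$ is a strict monotone bijection onto $[0,\infty)$, so that $q(t)$ is uniquely determined and differentiable.
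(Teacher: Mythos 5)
Your proof is correct and follows essentially the same route as the paper: both derive $\pi(1-Q(t))=t$ from the backward Kolmogorov equation (the paper first establishes $\pi(F(s,t))=\pi(s)+t$ and sets $s=0$, while you set $s=0$ first and separate variables, which is the same computation), and both obtain \eqref{q'} by differentiating \eqref{rq} and using $\rho'(x)=1/g(x)$ from \eqref{rg}. Your remark about $f(v)-v>0$ guaranteeing monotonicity of $\pi$ matches the paper's standing assumption preceding the definition of $\pi$.
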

\begin{proof} The generating function $F(s,t) =\Ex[s^{Z(t)}]$ of the Markov branching process satisfies the backward Kolmogorov equation
\[\frac{\partial F(s,t)}{\partial t}=f(F(s,t))-F(s,t)\]
 with the boundary condition $F(s,0)=s$. It follows that
\begin{equation}\label{raz}
    \pi(F(s,t)) = \pi(s) + t.
\end{equation}
Putting $s=0$ yields the asserted equality \eqref{rq}
 \begin{equation}\label{dva}
    \pi(1-Q(t)) = t,
\end{equation}
since $\pi(0)=0$ and $Q(t)=1-F(0,t)$. Furthermore, \eqref{raz} and \eqref{dva} give an important representation
\begin{equation}\label{F_och_Q}
1-F(s,t) = Q(\pi(s)+t).
\end{equation}
After differentiating both sides of \eqref{rq} we find
\begin{equation}\label{to}
 \rho'(q(x))=\frac{1}{q'(x)}   
\end{equation}
which together with \eqref{rg} implies \eqref{q'}. \end{proof}

\begin{lemma}\label{l2} If \eqref{alpha} holds with $\alpha=0$, then \eqref{beta} implies \eqref{beta1}. On the other hand, if $\Ex[\nu]=1$, then \eqref{beta1} implies \eqref{alpha} with $\alpha=0$ and \eqref{beta}.
\end{lemma}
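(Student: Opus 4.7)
The plan is to work with the tail generating function $G(s)=\sum_{n\ge 0}\Prob(\nu>n)s^n$, which satisfies the elementary identity $1-f(s)=(1-s)G(s)$ and, since $\Ex[\nu]=1$, the boundary value $G(1)=1$. Combined with \eqref{alpha} at $\alpha=0$ this rewrites as
\[
\frac{1-G(s)}{1-s}=\frac{L(1/(1-s))}{1-s},
\]
and both implications will be read off this identity via Karamata's Tauberian/Abelian machinery.

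For the forward direction, the left-hand side is the generating function of the monotone sequence $b_n=\sum_{j>n}\Prob(\nu>j)$. Karamata's Tauberian theorem applied to the slowly varying right-hand side gives $\sum_{k\le n}b_k\sim nL(n)$, and the monotone density theorem then yields $b_n\sim L(n)$; invoking \eqref{beta} one arrives at $b_n\sim(\ln n)^{-\beta}L_1(\ln n)$. It remains to pass from the integrated tail $b_n$ to the pointwise tail $\Prob(\nu>n+1)=b_n-b_{n+1}$. Since $\Prob(\nu>n)$ is itself monotone, a second monotone-density argument---the place where \cite[Thm 8.1.8]{BGT} enters, since differentiating $x\mapsto(\ln x)^{-\beta}L_1(\ln x)$ produces the extra factor $\beta/x$ together with one extra power of $\ln$---delivers \eqref{beta1}.

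For the reverse direction I start from \eqref{beta1} and compute
\[
b_n=\sum_{j>n}\Prob(\nu>j)\sim\int_{\ln n}^{\infty}\beta y^{-1-\beta}L_1(y)\,dy\sim(\ln n)^{-\beta}L_1(\ln n),
\]
using the substitution $y=\ln x$ and Karamata's theorem on integrals of regularly varying functions. Feeding this back into the Abelian half of Karamata's theorem (applied to the slowly varying $b_n$) yields $(1-G(s))/(1-s)\sim(1-s)^{-1}(\ln(1/(1-s)))^{-\beta}L_1(\ln(1/(1-s)))$ as $s\uparrow 1$, hence $1-G(s)\sim L(1/(1-s))$ with $L(x)\sim(\ln x)^{-\beta}L_1(\ln x)$, which is \eqref{beta}. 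Multiplying through by $1-s$ gives $f(s)-s=(1-s)(1-G(s))\sim(1-s)L(1/(1-s))$, i.e.\ \eqref{alpha} with $\alpha=0$.

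The main technical obstacle is the same in both directions: we are strictly inside the slowly varying regime, so unlike in the case $0<\alpha\le 1$ one cannot simply differentiate a regularly varying function of the form $x^\rho\ell(x)$. The factor $\beta$ and the additional logarithmic power relating $b_n$ and $\Prob(\nu>n)$ have to be extracted either by an explicit discrete-differentiation estimate exploiting monotonicity, or by invoking the de~Haan-type refinement of Karamata's theorem packaged in \cite[Thm 8.1.8]{BGT}. Once that calibration is in place, the remaining Abelian/Tauberian bookkeeping is routine.
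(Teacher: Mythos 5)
Your first half does follow the paper's own route: your identity for $G$ is the paper's \eqref{ff} in disguise, and combining Karamata's Tauberian theorem with the monotone density theorem (the paper packages this as \cite[Cor.\ 1.7.3]{BGT}) legitimately gives $b_n=\sum_{j>n}\Prob(\nu>j)\sim L(n)\sim(\ln n)^{-\beta}L_1(\ln n)$; likewise your converse direction (Karamata integration of \eqref{beta1}, then the Abelian half to recover $L$) is sound and matches the paper's one-line indication via \cite[Thm 1.5.11]{BGT}. The genuine gap is the step you describe as ``a second monotone-density argument'' based only on the monotonicity of $\Prob(\nu>n)$. That cannot work as stated: $b_n$ is slowly varying in $n$ (regular variation index $0$), and at index $0$ the monotone density theorem is vacuous --- from monotonicity one only extracts $n\Prob(\nu>n)=o(b_n)$, never the constant $\beta$ and the extra logarithm. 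Indeed, monotonicity of the tail together with $b_n\sim(\ln n)^{-\beta}L_1(\ln n)$ genuinely does not pin down \eqref{beta1}: take $\Prob(\nu>j)$ constant on dyadic blocks $2^k\le j<2^{k+1}$ with value $\beta\ln 2\cdot 2^{-k}(k\ln 2)^{-1-\beta}$; one checks that the tail is non-increasing and $b_n\sim(\ln n)^{-\beta}$ still holds, while $n\Prob(\nu>n)(\ln n)^{1+\beta}$ oscillates between $\beta\ln 2$ and $2\beta\ln 2$. So no argument of the shape ``integrated asymptotics plus monotonicity of $\Prob(\nu>n)$'' can deliver the conclusion; the calibration you postpone to your last paragraph is not bookkeeping, it is the whole content of the forward implication.

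What the paper actually does at this point --- and what your sketch is missing --- is an exponential change of variables: rewriting the integrated-tail asymptotics as $\int_z^\infty\Prob(\nu>e^y)e^y\,dy\sim z^{-\beta}L_1(z)$ moves the problem into the regime of regular variation with strictly negative index $-\beta$, where the monotone density/Tauberian theorem \cite[Thm 1.7.2b]{BGT} is applied to the transformed density $e^y\Prob(\nu>e^y)$ and produces exactly the factor $\beta$ in \eqref{beta1}. (Your citation of \cite[Thm 8.1.8]{BGT} for this step is off target: in the paper that result is invoked only for the remark about the moments $\Ex[\nu(\ln_+\nu)^{\beta\pm\epsilon}]$.) Note that even in this formulation the de-integration rests on a monotonicity/Tauberian condition for the \emph{transformed} density $e^y\Prob(\nu>e^y)$, not for $\Prob(\nu>n)$ itself --- which is precisely why the step cannot be waved through and why your forward direction, as written, is unproved.
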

\begin{proof} First notice that
\[1-f(s)=(1-s)\sum_{k=0}^\infty \Prob(\nu>k)s^k\]
and similarly, since $\Ex[\nu]=\sum_{k= 0}^{\infty}\Prob(\nu>k)=1$,
\begin{equation}\label{ff}
    \frac{f(s)-s}{1-s}=(1-s)\sum_{k=0}^\infty \left(\sum_{i>k}\Prob(\nu>i)\right)s^k.
\end{equation}
Therefore, condition \eqref{alpha} with $\alpha=0$ is equivalent to
\[\sum_{k=0}^\infty \left(\sum_{i>k}\Prob(\nu>i)\right)s^k=(1-s)^{-1}L \left({1\over1-s}\right).\]
According to \cite[Cor.\ 1.7.3]{BGT} the latter is equivalent to
\[\sum_{i>k}\Prob(\nu>i)\sim L (k),\ k\to\infty\]
or in the integral form
\[\int_{x}^\infty\Prob(\nu>y)dy\sim L (x),\ x\to\infty.\]

Now given \eqref{beta} we apply \cite[Thm 1.7.2b]{BGT} to see that
\[\int_{z}^\infty\Prob(\nu>e^y)e^ydy\sim z^{-\beta}L_1(z),\ z\to\infty\]
entails 
\[\Prob(\nu>e^z)e^z\sim \beta z^{-1-\beta}L_1(z),\ z\to\infty\]
which is \eqref{beta1}. To prove the assertion in the opposite direction one should apply \cite[Thm 1.5.11]{BGT}.
\end{proof}

\begin{lemma}\label{l3} Under conditions of Theorem \ref{th} as $x\to\infty$
\begin{eqnarray}
 g'(x)&\sim& -\frac{\beta g(x)}{x},\label{g'}\\
  q''(x)&\sim& \frac{\beta q'(x)^2}{q(x)},\label{le2}\\
  \rho''(q(x))&\sim& \frac{\beta}{q'(x)q(x)}. \label{le3}
\end{eqnarray}
If we define $c(t)$ by
\begin{equation}\label{cc}
    {1\over c(t)}=q\left({1\over g(q(t))}\right),
\end{equation}
then it will satisfy \eqref{c}. 
\end{lemma}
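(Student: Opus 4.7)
The strategy is to establish \eqref{g'} first, derive \eqref{le2} and \eqref{le3} by chain-rule manipulations of the identities in Lemma~\ref{l1}, and then verify \eqref{c} by direct substitution into \eqref{cc}.

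For \eqref{g'}, note that by \eqref{rg} one has $\rho'(x) = 1/g(x)$, so by \eqref{r} the function $\rho'$ is regularly varying of index $\beta$. Since $f$ is analytic on $[0,1)$, $L$ and hence $g$ are smooth, so $g'$ exists. The identity $g'(x) = -g(x)^2\,\rho''(x)$ reduces \eqref{g'} to the assertion $\rho''(x) \sim \beta x^{\beta-1}/L_1(x)$, which may be obtained by applying the monotone density theorem \cite[Thm 1.7.2b]{BGT} to $\rho'(x) = \int_0^x \rho''(w)\,dw$ once $\rho''$ is seen to be eventually monotone. Alternatively, writing $g(x) = e^x(f(1-e^{-x}) - 1 + e^{-x})$ and differentiating gives $g'(x) = g(x) - (1 - f'(1 - e^{-x}))$, and a second-order Tauberian expansion of $1 - f'(1 - e^{-x})$, based on the tail asymptotic $\sum_{k \geq m} k p_k \sim (\ln m)^{-\beta} L_1(\ln m)$ that follows from Lemma~\ref{l2}, supplies the needed refinement.

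Given \eqref{g'}, the relations \eqref{le2} and \eqref{le3} follow by routine chain-rule calculations. Differentiating \eqref{q'} yields $q''(x) = g'(q(x))\,q'(x)$, into which one substitutes \eqref{g'} evaluated at $q(x)$ together with the identity $g(q(x)) = q'(x)$ to obtain \eqref{le2}. For \eqref{le3}, differentiating the identity \eqref{to} once more with respect to $x$ gives $\rho''(q(x))\,q'(x) = -q''(x)/q'(x)^2$, into which \eqref{le2} is substituted.

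For the final assertion, plug $q(t) \sim t^{1/(1+\beta)} L_q(t)$ (from \eqref{Q}) and $g(y) \sim y^{-\beta} L_1(y)$ (from \eqref{g}) into \eqref{cc}. Composing stepwise: $g(q(t)) \sim q(t)^{-\beta} L_1(q(t))$, then $1/g(q(t)) \sim q(t)^\beta/L_1(q(t))$, and finally $q(1/g(q(t))) \sim [q(t)^\beta/L_1(q(t))]^{1/(1+\beta)} L_q(q(t)^\beta/L_1(q(t)))$. Using $q(t)^\beta \sim t^{\beta/(1+\beta)} L_q(t)^\beta$ and observing that the remaining $L_q, L_1$ factors are slowly varying functions of $t$ yields the representation \eqref{c} with index $\beta/(1+\beta)^2$.

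The main technical obstacle is the first step: upgrading the regular variation of $g$ to a pointwise asymptotic for $g'$ requires an extra input beyond mere regular variation, either the eventual monotonicity of $\rho''$ or a second-order Tauberian analysis of $1 - f'$. Once \eqref{g'} is in hand, the remaining derivations reduce to routine chain-rule computations and composition of regularly varying functions.
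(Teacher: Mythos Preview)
Your overall plan matches the paper's: establish \eqref{g'} first, derive \eqref{le2} and \eqref{le3} via the chain-rule identities $q''=g'(q)\,q'$ and $\rho''(q)=-q''/(q')^{3}$, and handle \eqref{c} by composing regularly varying functions. For \eqref{le2}, \eqref{le3} and the final assertion your argument is essentially the paper's.

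The only substantive difference is in how \eqref{g'} is obtained, and here your proposal has a gap. Your first route---applying the monotone density theorem to $\rho'(x)=\int_0^x\rho''(w)\,dw$---requires eventual monotonicity of $\rho''=-g'/g^{2}$, which is not given by the hypotheses and which you do not establish. Your second route, via $g'(x)=g(x)-\bigl(1-f'(1-e^{-x})\bigr)$, is the right direction but you overstate the difficulty by invoking a ``second-order Tauberian expansion'': since $1-f'(1-e^{-x})\sim g(x)$ to leading order, a naive subtraction would indeed need second-order control. The paper sidesteps this by performing the subtraction at the \emph{coefficient} level. Differentiating $L\bigl((1-s)^{-1}\bigr)=(f(s)-s)/(1-s)$ and combining the power-series representations of $1-f'(s)$ and $f(s)-s$ gives the exact identity
\[
\frac{1}{(1-s)^{2}}\,L'\!\left(\frac{1}{1-s}\right)
=-\sum_{k\ge 0}(k+1)\,\Prob(\nu>k+1)\,s^{k}.
\]
By \eqref{beta1} the coefficients satisfy $(k+1)\Prob(\nu>k+1)\sim\beta(\ln k)^{-1-\beta}L_1(\ln k)$, so a single first-order Abelian/Tauberian step (BGT Prop.~1.5.8 followed by Cor.~1.7.3) yields the asymptotic of $(1-s)^{-1}L'\bigl((1-s)^{-1}\bigr)$, and \eqref{g'} follows from $g'(x)=e^{x}L'(e^{x})$. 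The point you are missing is that the cancellation between $g(x)$ and $1-f'(1-e^{-x})$ is exact term by term, with remainder coefficients directly controlled by \eqref{beta1}; no genuine second-order argument is required.
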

\begin{proof} In the critical case we have
\begin{eqnarray*}
  1-f'(s) &=& (1-s)\sum_{k=0}^\infty \left(\sum_{i=k+2}^{\infty}i\Prob(\nu=i)\right)s^k \\
          &=& (1-s)\sum_{k=0}^\infty \Ex\left[\nu \mathbf{1}_{\{\nu\geq k+2\}}\right]s^k, 
\end{eqnarray*}
and on the other hand,  due to \eqref{ff}
\begin{eqnarray*}
  f(s)-s &=& (1-s)^2\sum_{k=0}^\infty \left(\sum_{i>k}\Prob(\nu>i)\right)s^k \\
          &=& (1-s)^2\sum_{k=0}^\infty \Ex\left[(\nu-k-1) \mathbf{1}_{\{\nu\geq k+2\}}\right]s^k, 
\end{eqnarray*}
These two relations together with $L((1-s)^{-1})=(f(s)-s)/(1-s)$ yield
\begin{align*}
  \frac{1}{(1-s)^2}L'\left(\frac{1}{1-s}\right)&={f'(s)-1\over1-s}+{f(s)-s\over(1-s)^2}\\
          &= -\sum_{k=0}^\infty \left(\Ex\left[\nu \mathbf{1}_{\{\nu\geq k+2\}}\right]-\Ex\left[(\nu-k-1) \mathbf{1}_{\{\nu\geq k+2\}}\right]\right)s^k\\
          &= -\sum_{k=0}^\infty (k+1)\Prob(\nu >k+1)s^k, 
\end{align*}
thus due to \eqref{beta1} as $s\to1$ 
\begin{eqnarray*}
  L'\left(\frac{1}{1-s}\right)&\sim&-\beta(1-s)^2\sum_{k=0}^\infty (\ln k)^{-1-\beta}L_1(\ln k)s^k. 
\end{eqnarray*}
Since due to \cite[Prop.\ 1.5.8]{BGT}
\begin{eqnarray*}
  \sum_{k=0}^n (\ln k)^{-1-\beta}L_1(\ln k)\sim n(\ln n)^{-1-\beta}L_1(\ln n),
\end{eqnarray*}
we derive from the previous relation applying \cite[Cor.\ 1.7.3]{BGT} 
\begin{eqnarray*}
  {1\over1-s}L'\left({1\over1-s}\right)&\sim&-\beta|\ln (1-s)|^{-1-\beta}L_1(-\ln (1-s)). 
\end{eqnarray*}
Now \eqref{g'} follows from the relations $g'(x)=e^xL'(e^x)$ and \eqref{beta}.

To derive \eqref{le2} it is enough to observe that $q''(x)=g'(q(x))q'(x)$ and use \eqref{g'}. From \eqref{to} we get $\rho''(q(x))=-q''(x)/q'(x)^3$. This and \eqref{le2} give \eqref{le3}. The last assertion of the lemma is a simple consequence of the basic properties of regular varying functions.
\end{proof}

\section{Proof of Theorems \ref{th} and \ref{the}}\label{s4}

The asymptotics \eqref{Q} of $Q(t)$ stated in Theorem \ref{th} follows from \eqref{r} and \eqref{rq} in view of \cite[Thm 1.5.12]{BGT}. 

We prove \eqref{lt} and Theorem \ref{the} after deriving an expression for the generating function of the reduced process
$$
\tilde{F}(s;u,t) = \Ex[s^{Z(u,t)}]
$$
in terms of $F(s,t)$. The survival probability at time $t$ for a branching process starting from a single particle at time $u$ is equal to $Q(t-u)$. Since the total number of particles alive at time $u$ is described by $F(s,u)$ we can write
$$\tilde{F}(s;u,t) = F(1-Q(t-u)+Q(t-u)s,u).$$ 
Combining this with the obvious relation
$$\tilde{F}(s;u,t)=1-Q(t)+\Ex[s^{Z(u,t)}|Z(t)>0]Q(t)$$
we deduce
\begin{align*}
\Ex[s^{Z(u,t)}|Z(t)>0] &= \frac{\tilde{F}(s;u,t)-1+Q(t)}{Q(t)} \notag\\
&= 1- \frac{1-F(1-Q(t-u)+Q(t-u)s,u)}{Q(t)}.
\end{align*}
 In view of \eqref{F_och_Q} it follows that
 \begin{align}
\Ex[s^{Z(u,t)}|Z(t)>0] &= 1- \frac{Q(\pi(1-Q(t-u)(1-s))+u)}{Q(t)}\notag\\
&=1-\exp\left\{q(t)-q(t+\Delta(s,t-u))\right\},\label{Fq}
\end{align}
where 
$$\Delta(s,t)=\rho(q(t)-\ln(1-s))-t.$$ 

Using \eqref{Fq} with $u=t$ we get
\begin{equation}\label{Fred_Rtt}
-\ln(1-\Ex[s^{Z(t)}|Z(t)>0]) = q(\rho(-\ln(1-s))+t)-q(t).
\end{equation}
To prove \eqref{lt} we study \eqref{Fred_Rtt} with $s=1-e^{-x/c(t)}$, where $x$ is a fixed positive number and $c(t)$ is defined by \eqref{cc}.
According to Lemma \ref{l3} and \eqref{r}  
$$\rho(x/c(t))\sim x^{1+\beta}/q'(t),\ t\to\infty,$$
and therefore, by a Taylor expansion around $t$,
\begin{align*}
-\ln(1-&\Ex[(1-e^{-x/c(t)})^{Z(t)}|Z(t)>0]) \\
&= q(\rho(x/c(t))+t)-q(t) \\
&= \rho(x/c(t))q'(t)+\rho(x/c(t))^2O(q''(t)) \\
&\to x^{1+\beta}.
\end{align*} 
Thus
$$
\Ex[(1-e^{-x/c(t)})^{Z(t)}|Z(t)>0] \to 1-e^{-x^{1+\beta}}
$$
and by the arguments of Darling \cite{Darling:1970} and Seneta \cite{Seneta:1973}, or Nagaev and Wachtel \cite{NW}, this implies that
$$
\Prob(\ln Z(tx,t) \leq xc(t)|Z(t)>0)\to 1-e^{-x^{1+\beta}},
$$
which finishes our proof of Theorem \ref{th}.

The proposed limit process $R(x)=Z_0(-\beta(1+\beta)^{-1}\log(1-x))$ in Theorem \ref{the} is a time inhomogeneous Markov branching process, and from \eqref{limit_dist} we find that
$$
\Ex[s^{R(y)}|R(x)=1] = 1 - (1-s)^{\left(\frac{1-y}{1-x}\right)^{\beta/(1+\beta)}},\, 0\leq x \leq y < 1.
$$
This yields
\begin{equation}\label{FSS-koppling}
\Prob(R(y)=1|R(x)=1)=\left(\frac{1-y}{1-x}\right)^{\beta/(1+\beta)}.
\end{equation}
By similar arguments that led to \eqref{Fq} we have 
$$
\Ex[s^{Z(v,t)}|Z(u,t)=1, Z_t>0] = 1-\exp\{q(t-u)-q(\Delta(s,t-v)+t-u)\},
$$
for $0\leq u \leq v < t$. In order to prove convergence of finite dimensional distributions it suffices to show
$$
q(\Delta(s,xt)+yt)-q(yt)\to -\left[\log(1-s)^{(x/y)^{\beta/(1+\beta)}}\right],\, 0< x\leq y \leq 1.
$$
We do this in two steps. First we find the asymptotics of $\Delta(s,t)$ by Taylor expansion of the function $\rho$ around $q$
\begin{eqnarray*}
 \Delta(s,t) &=&\rho(q(t)-\ln(1-s))- \rho(q(t))\\
   &=& -\ln(1-s)\rho'(q(t))+O(\rho''(q(t))) \\
   &\sim& -\ln(1-s)/q'(t).
\end{eqnarray*} 
And second we do another Taylor expansion, this time of the function $q$ around $yt$
\begin{align*}
q(yt+\Delta(s,xt))-q(yt)&= -\ln(1-s)\frac{q'(yt)}{q'(xt)}+(q'(t))^{-2}O(q''(t)) \\
&\to -\ln (1-s)\left(\frac{x}{y}\right)^{\beta/(1+\beta)},
\end{align*}
according to Lemma \ref{l3}. This finishes the proof of convergence of finite dimensional distributions in Theorem \ref{the}.

To show the convergence in the Skorohod sense, we can now copy the proof in \cite{FSS} almost verbatim. The only difference  is that we have \eqref{FSS-koppling} with $0<\beta/(1+\beta)<1$, whereas they in our notation have the expression $(1-y)/(1-x)$ for the corresponding probability. The difference in the exponent has no consequence for the proof.

\textbf{Acknowledgement.} We thank V.\ Wachtel for stimulating discussions of his recent paper \cite{NW}.

\end{document}